\documentclass[a4paper,12pt]{amsart}

\usepackage{euscript,eufrak,verbatim}
\usepackage[psamsfonts]{amssymb}
\usepackage[usenames]{color}
\usepackage[usenames]{color}
\usepackage[colorlinks,linkcolor=red,anchorcolor=blue,citecolor=blue]{hyperref}
\newtheorem{theorem}{Theorem}[section]
\newtheorem{lemma}[theorem]{Lemma}

\def\R{\mathbb{R}}

\def\Z{\mathbb{Z}}
\def\Q{\mathbb{Q}}

\def\Zp{\mathbb{Z}_{p}}
\def\Qp{\mathbb{Q}_{p}}

\def\m{\mathfrak{m}}

\begin{document}

\title[Bounded tiles in $\Q_p$ are compact open sets ]{Bounded tiles in $\Q_p$ are compact open sets}

\date{} 

\author
{Aihua Fan}

\address
{LAMFA, UMR 7352 CNRS, University of Picardie,
33 rue Saint Leu, 80039 Amiens, France}

\email{ai-hua.fan@u-picardie.fr}

\author
{Shilei Fan}

\thanks{A. H. FAN was supported by NSF of China (Grant No. 11471132) and self-determined research funds of CCNU (Grant No. CCNU14Z01002); S. L. FAN was supported by NSF of China (Grant No.s 11401236 and 11231009)}

\address
{School of Mathematics and Statistics, Central China Normal University, 430079, Wuhan, China }

\email{slfan@mail.ccnu.edu.cn}
%
%
%
%
%
%
%

\begin{abstract}
 Any bounded tile of  the field $\Q_p$ of $p$-adic numbers is a  compact open set up to a  zero Haar measure set. In this note, we give a simple and direct proof of this fact.
\end{abstract}
\subjclass[2010]{Primary 05B45; Secondary 26E30}
\keywords{$p$-adic field, tile, bounded, compact open}
\maketitle

\section{Introduction}
Let $G$ be a locally compact abelian group and let $\Omega\subset G$ be a Borel set of positive and finite Haar measure.
We say that the set  $\Omega$ is a   {\em  tile} of  $G$  if there exists a set $T \subset G$ of translates
such that $\sum_{t\in T} 1_\Omega(x-t) =1$ for almost all $x\in G$, where $1_A$ denotes the indicator function of a set $A$. Such a set $T$ is called a
{\em tiling complement} of $\Omega$ and $(\Omega, T)$ is called a {\em tiling pair}.

In the  case of $G= \mathbb{R}$ considered as an additive group,  compact sets  of positive measure that tile $\R$ by translation were extensively  studied. The simplest case concerns compact sets consisting of  finite number of unit intervals all of whose endpoints are integers. This tiling problem can be reformulated in terms of   finite subsets  of $\Z$ which tile the group $\Z$.   See \cite{Tijdeman}  for references on the study of tiling problem in the group $\mathbb{Z}$.
There are also investigations on the existence of tiles   having infinitely many connected components.  For example,    self-affine tiles are studied by Bandt \cite{Ban}, Gr\"ochenig and Haas \cite{GH94}, Kenyon \cite{kenyonthesis}, Lagarias and Wang \cite{Lagarias-Wang1996}.
A structure theorem for bounded tiles in  $\R$ is obtained by Lagarias and Wang in \cite{Lagarias-WangInvent} where it is proved that all tilings of $\R$ by a bounded region (compact set with zero boundary measure) must be periodic, and that the corresponding tiling complements   are rational up to affine transformations. 

As we shall see, the situation the field of $p$-adic numbers is relatively simple.

Let $\Q_p$ be the field of $p$-adic numbers and let $\mathfrak{m}$
be the Haar measure of $\Q_p$ such that $\mathfrak{m}(\Z_p) = 1$ where $\Z_p$ is the  ring of $p$-adic integers.
A Borel set $\Omega \subset \Q_p$ is said to be {\em almost compact open} if there exists a compact open set $\Omega^{\prime}$  such that $\m(\Omega\setminus \Omega^{\prime})=\m(\Omega^{\prime}\setminus\Omega)=0$. 
In this note, we prove the following theorem.

\begin{theorem} \label{mainthm}
Assume that  $\Omega\subset \Q_p$ is  a bounded  Borel set of positive and finite  Haar measure. If $\Omega$  tiles $\Qp$ by translation, then it is an almost compact open set.
\end{theorem}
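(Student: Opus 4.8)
The plan is to reduce the statement to a tiling of the compact group $\Zp$ and then analyse it by Fourier analysis. Since $\Omega$ is bounded we have $\Omega\subseteq p^{-N}\Zp$ for some $N$, and the group automorphism $x\mapsto p^{N}x$ of $\Qp$ (which rescales $\m$ and preserves both the properties ``tile'' and ``almost compact open'') lets me assume $\Omega\subseteq\Zp$. Because $\Omega\subseteq\Zp$, each translate $\Omega+t$ lies in the single coset $t+\Zp$; hence the tiling of $\Qp$ splits coset by coset, and in particular the translates with $t\in T\cap\Zp$ are exactly those meeting $\Zp$ and they tile $\Zp$. Thus $\Omega$ tiles the \emph{compact} group $\Zp$ with complement $T':=T\cap\Zp$, and since the translates are essentially disjoint of equal measure $\m(\Omega)>0$, the set $T'=\{t_1,\dots,t_k\}$ is finite with $k=1/\m(\Omega)$.

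Next I set up Fourier analysis on $\Zp$, whose dual is $\Qp/\Zp$: a character is $x\mapsto\chi(\xi x)$ for $\xi\in\Qp/\Zp$, where $\chi$ is the standard additive character of $\Qp$, and I call the \emph{order} of $\xi$ (a power of $p$) its order in $\Qp/\Zp$. The key reformulation is that $\Omega$ is almost compact open if and only if $1_\Omega$ agrees a.e.\ with a function constant on the cosets of some $p^{M}\Zp$, which holds if and only if the Fourier coefficients $\widehat{1_\Omega}(\xi)$ vanish for every $\xi$ of order exceeding $p^{M}$. The tiling relation $\sum_i 1_\Omega(x-t_i)=1$ is the convolution identity $1_\Omega*\mu_{T'}=1_{\Zp}$ with $\mu_{T'}=\sum_i\delta_{t_i}$; taking Fourier coefficients gives $\widehat{1_\Omega}(\xi)\,\widehat{\mu_{T'}}(\xi)=0$ for all $\xi\neq0$, where $\widehat{\mu_{T'}}(\xi)=\sum_{i=1}^{k}\chi(-\xi t_i)$ is the mask of $T'$. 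Hence it suffices to prove that $\widehat{\mu_{T'}}(\xi)\neq0$ for all $\xi$ of sufficiently large order.

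This nonvanishing is the heart of the argument, and is where the prime structure of $\Qp$ is essential. Fix $n_0$ so that $t_1,\dots,t_k$ are distinct modulo $p^{n_0}$, whence $v_p(t_i-t_j)\le n_0-1$ for $i\neq j$, where $v_p$ is the $p$-adic valuation. If $\xi$ has order $p^{m}$, each $\chi(-\xi t_i)$ is a $p^{m}$-th root of unity, so $\widehat{\mu_{T'}}(\xi)$ is a sum of $k\ge1$ such roots. I would then invoke the rigidity of vanishing sums of prime-power roots of unity (Lam--Leung): such a sum can vanish only if its terms partition into blocks, each consisting of $p$ roots that are the primitive $p$-th-root rotations of one another. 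Two terms $\chi(-\xi t_i)$ and $\chi(-\xi t_j)$ share a block precisely when the ratio $\chi(-\xi(t_i-t_j))$ is a primitive $p$-th root of unity, i.e.\ when $v_p(t_i-t_j)=m-1$. As $v_p(t_i-t_j)\le n_0-1$, this is impossible once $m>n_0$; then no two terms share a block, a vanishing sum is excluded, and $\widehat{\mu_{T'}}(\xi)\neq0$. Consequently $\widehat{1_\Omega}(\xi)=0$ for every $\xi$ of order $>p^{n_0}$, so $1_\Omega$ is a.e.\ constant on the cosets of $p^{n_0}\Zp$; being $\{0,1\}$-valued, it is a.e.\ the indicator of a finite union of such cosets, that is, of a compact open set.

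The main obstacle is exactly the mask nonvanishing just described: over a general compact abelian group a mask can vanish at characters of arbitrarily large order, and the conclusion can fail, so the proof genuinely rests on the cyclotomic rigidity available for the prime-power roots of unity forced by the structure of $\Qp$. Two further points need care. First, the reduction must guarantee that $T'$ is finite, which uses the compactness of $\Zp$ together with $\m(\Omega)>0$. Second, the passage from ``$\widehat{1_\Omega}$ supported on orders $\le p^{n_0}$'' to an honest step function uses that there are only finitely many characters of bounded order, so the Fourier expansion of $1_\Omega$ is in fact a finite sum, after which the values on the finitely many cosets must themselves be $0$ or $1$.
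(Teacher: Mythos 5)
Your proof is correct and follows essentially the same route as the paper: reduce to a tiling of $\Zp$ with a finite translation set, Fourier-transform the convolution identity $1_{\Omega}*\mu_{T}=1_{\Zp}$, and use the rigidity of vanishing sums of $p$-power roots of unity (your Lam--Leung citation is, for prime-power roots, exactly the paper's Schoenberg lemma) to show the mask $\widehat{\mu_{T}}$ cannot vanish at high frequencies, forcing $\widehat{1_{\Omega}}$ to have bounded support and $1_{\Omega}$ to be a.e.\ locally constant. The only difference is one of framing: you run the harmonic analysis on the compact group $\Zp$ with discrete dual $\Qp/\Zp$ and conclude via a finite Fourier series, whereas the paper uses the Fourier transform on the self-dual field $\Qp$ and concludes via Fourier inversion and its local-constancy lemma; these are the same computation in different notation.
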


Compact open tiles in $\Q_p$ are characterized  in \cite{FFS} by the $p$-homogeneity  which is easy to check.  
Actually, it is proved in \cite{FFLS2015} that tiles in $\Q_p$ are always compact open without the boundedness assumption. But the proof in \cite{FFLS2015} was based on the theory of Bruhat-Schwartz distributions and on the theory of the Colombeau algebra of generalized functions. The proof of Theorem \ref{mainthm} given in this note is more direct and easier to understand.

\section{Proof of Theorem \ref{mainthm}}
The proof is based on Fourier analysis. We first recall some notation and some useful facts on Fourier analysis and on the $p^n$-th roots of unity. 
 
\subsection{Recall of  notation}\label{definition}
We denote by $|\cdot|_p$ the {\em absolute value} on $\Q_p$.
 A typical element of $\mathbb{Q}_p$ is expanded as a convergent series
$$
    x= \sum_{n= v}^\infty a_n p^{n} \qquad (v\in \mathbb{Z}, a_n \in \{0,1,\cdots, p-1\}, a_{v}\neq 0), 
$$
where $v$ is the valution of $x$, which satisfies $|x|_p=p^{-v}$.
We wirte $\{x\}:= \sum_{n=v}^{-1} a_n p^n$, which  is called the  {\em fractional part} of $x$. 
A non-trivial additive {\em character} of $\Q_p$ is the following function
$$
    \chi(x) = e^{2\pi i \{x\}},
$$
which produces all characters $\chi_y$ of $\mathbb{Q}_p$ $(y \in \Q_p)$, where
$\chi_y(x) =\chi(yx)$. 
Remark that  each $\chi_y(\cdot)$  is uniformly locally  constant,  i.e.
\begin{equation}\label{chi}
\chi_y(x)=\chi_y(x^{\prime}), \hbox{ if } |x-x^{\prime}|_p\leq  \frac{1}{|y|_p} . 
\end{equation}


\noindent Notation:
\\ \indent
$\Zp^\times := \Zp\setminus p\Zp=\{x\in \Qp: |x|_p=1\}$.
It is the group of units of $\Zp$.


$B(0, p^{n}): = p^{-n} \Zp$.  It is the (closed) ball centered at $0$ of radius $p^n$.

$B(x, p^{n}): = x + B(0, p^{n})$. 



$1_A:$ the indicator function of a set $A$.

$\mathbb{L}:=\left\{\{x\}, x\in \Q_p\right\}.$ It is  a complete set of representatives of the cosets of the additive subgroup $\mathbb{Z}_p$. 

%
\subsection{Fourier transformation}

Let $\mu$ be a finite Borel measure on $\Qp$. The {\em Fourier transform} of
$\mu$ is classically defined to be
$$
    \widehat{\mu} (y)
    = \int_{\Qp} \overline{\chi}_y(x) d\mu(x) \qquad (y \in \Qp).
$$
The Fourier transform $\widehat{f}$ of $f \in L^1(\Qp)$ is that of $\mu_f$ where $\mu_f$ is the measure
defined by $d\mu_f = f d\mathfrak{m}$.

The following lemma shows that the Fourier transform of the indicator function of a ball centered at $0$
is a function of the same type.
 \begin{lemma}[\cite{Fan,FFS}]\label{FourierIntegral} Let  $\gamma\in \mathbb{Z}$ be an integer. 
  We have   $$
  \forall \xi \in \Qp, \ \ \ 
  \widehat{1_{B(0, p^\gamma)}}(\xi)= p^\gamma 1_{B(0, p^{-\gamma})} (\xi).$$ 
\end{lemma}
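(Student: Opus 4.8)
The plan is to compute the Fourier integral
$\widehat{1_{B(0,p^\gamma)}}(\xi)=\int_{B(0,p^\gamma)}\overline{\chi_\xi(x)}\,d\m(x)$
directly, splitting into the two cases $\xi\in B(0,p^{-\gamma})$ and $\xi\notin B(0,p^{-\gamma})$, so as to reproduce the right-hand side $p^\gamma 1_{B(0,p^{-\gamma})}(\xi)$. The structural observation driving everything is that $B(0,p^\gamma)=p^{-\gamma}\Zp$ is a compact additive group, and the restriction of $\overline{\chi_\xi}$ to it is a character of that group; the dichotomy is then exactly between the trivial and a nontrivial character. (Alternatively one could reduce to the case $\gamma=0$ by the change of variables $x=p^{-\gamma}u$ and the scaling of Haar measure, but the direct case analysis is cleaner.)

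First I would treat $\xi\in B(0,p^{-\gamma})$, i.e. $|\xi|_p\le p^{-\gamma}$. Then $1/|\xi|_p\ge p^\gamma$, so every $x\in B(0,p^\gamma)$ satisfies $|x-0|_p\le p^\gamma\le 1/|\xi|_p$, and the local constancy property \eqref{chi} gives $\chi_\xi(x)=\chi_\xi(0)=1$ on the whole ball. The integrand is therefore identically $1$, so the integral equals $\m(B(0,p^\gamma))$. By the scaling of Haar measure, $\m(p^{-\gamma}\Zp)=|p^{-\gamma}|_p\,\m(\Zp)=p^\gamma$, which matches $p^\gamma 1_{B(0,p^{-\gamma})}(\xi)=p^\gamma$ in this case.

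Next I would treat $\xi\notin B(0,p^{-\gamma})$, i.e. $|\xi|_p>p^{-\gamma}$, where the target value is $0$. Here $\overline{\chi_\xi}$ is a \emph{nontrivial} character on $G:=B(0,p^\gamma)$: picking $x_0\in G$ with $|x_0|_p=p^\gamma$ yields $|\xi x_0|_p=|\xi|_p\,p^\gamma>1$, so $\xi x_0\notin\Zp$ and hence $\chi_\xi(x_0)\neq 1$. The classical orthogonality argument then concludes: since $G$ is a group and $\m$ is translation invariant, substituting $x\mapsto x+x_0$ and using that $\chi_\xi$ is multiplicative gives $\int_G\overline{\chi_\xi(x)}\,d\m=\overline{\chi_\xi(x_0)}\int_G\overline{\chi_\xi(x)}\,d\m$, and since $\overline{\chi_\xi(x_0)}\neq 1$ the integral must vanish.

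The only genuinely delicate point is the threshold computation in the second case, namely verifying that $|\xi|_p>p^{-\gamma}$ really does furnish a point of $B(0,p^\gamma)$ on which the character is nontrivial. This is where the ultrametric multiplicativity $|\xi x|_p=|\xi|_p\,|x|_p$ together with the identification $\{y\}=0\iff y\in\Zp$ (equivalently $|y|_p\le 1$) must be applied carefully; everything else is either the direct measure computation of the first case or the standard vanishing of the integral of a nontrivial character.
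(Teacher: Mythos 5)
Your proof is correct and complete. Note that the paper itself gives no proof of this lemma --- it is imported from \cite{Fan,FFS} --- so there is no internal argument to compare against; your two-case computation is the standard one: for $|\xi|_p\le p^{-\gamma}$ the character is trivial on $B(0,p^\gamma)$ by the local constancy (\ref{chi}) and the integral is $\m(p^{-\gamma}\Zp)=p^\gamma$, while for $|\xi|_p>p^{-\gamma}$ the point $x_0=p^{-\gamma}$ satisfies $|\xi x_0|_p\ge p>1$, hence $\xi x_0\notin\Zp$, hence $\chi_\xi(x_0)\neq 1$, and the translation-invariance argument forces the integral to vanish. Both delicate points (the measure scaling $\m(p^{-\gamma}\Zp)=p^\gamma$ and the identification $\{y\}=0\iff y\in\Zp$ behind the nontriviality of the character) are handled correctly.
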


The following lemma shows that the Fourier transform of a compactly supported integrable function is uniformly locally constant. 

 \begin{lemma}\label{localconstant} 
	Let $f\in L^1(\Q_p)$ be  a complex-value integrable function. If $f$ is supported by $B(0,p^\gamma)$, then $$  \widehat{f}(x+u)=\widehat{f}(x),\quad  \forall  x\in \Q_p  \text{ and } \forall  u\in B(0, p^{-\gamma}).$$
  \end{lemma}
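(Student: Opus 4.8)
The plan is to compute $\widehat{f}(x+u)$ directly from the definition and show that translating the frequency by $u$ does not alter the integrand on the support of $f$. Writing the integration variable as $t$, and using that $f$ vanishes off $B(0,p^\gamma)$, I would start from
$$
\widehat{f}(x+u)=\int_{B(0,p^\gamma)}\overline{\chi}_{x+u}(t)\,f(t)\,d\mathfrak{m}(t).
$$
Since $\chi$ is an additive character (so $\chi(a+b)=\chi(a)\chi(b)$, the fractional parts adding mod $1$), the integrand factors as $\overline{\chi}_{x+u}(t)=\overline{\chi}\bigl((x+u)t\bigr)=\overline{\chi}(xt)\,\overline{\chi}(ut)=\overline{\chi}_x(t)\,\overline{\chi}_u(t)$.

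The crux is then to verify that $\overline{\chi}_u(t)=1$ for every $t$ in the support. This is where the two radius conditions combine: for $t\in B(0,p^\gamma)$ we have $|t|_p\le p^\gamma$, while $u\in B(0,p^{-\gamma})$ gives $|u|_p\le p^{-\gamma}$, so by multiplicativity of the absolute value $|ut|_p\le 1$, i.e. $ut\in\Z_p$. As $\chi$ is trivial on $\Z_p$ (the fractional part $\{ut\}$ vanishes), this yields $\overline{\chi}_u(t)=\overline{\chi}(ut)=1$. Equivalently, this is exactly the uniform local constancy \eqref{chi} applied to the character $\chi_t$: the bound $|u|_p\le 1/|t|_p$ forces $\chi_t(x+u)=\chi_t(x)$, and $\chi_t(x)=\chi_x(t)$ by symmetry.

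Combining these, the integrand for $\widehat{f}(x+u)$ equals $\overline{\chi}_x(t)f(t)$ pointwise on $B(0,p^\gamma)$, which is precisely the integrand defining $\widehat{f}(x)$; integrating gives $\widehat{f}(x+u)=\widehat{f}(x)$. Integrability of $f$ ensures the integral is well defined, so no convergence issue arises.

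Honestly, there is no serious obstacle here: the whole argument reduces to the one-line ultrametric estimate $|ut|_p\le 1$ together with the triviality of $\chi$ on $\Z_p$. The only points needing a little care are the bookkeeping of radii (checking that $B(0,p^{-\gamma})$ is indeed the correct dual ball, so that $|u|_p\,|t|_p\le 1$ comes out with the right exponents) and the harmless boundary value $t=0$, where $\overline{\chi}_u(0)=1$ trivially.
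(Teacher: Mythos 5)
Your proof is correct and follows essentially the same route as the paper's: both factor $\overline{\chi}_{x+u}(t)=\overline{\chi}_x(t)\,\overline{\chi}_u(t)$ via additivity of the character and then observe that $|ut|_p\le 1$ on the support forces $\chi(ut)=1$ (the paper phrases this by writing the difference $\widehat{f}(x+u)-\widehat{f}(x)$ as a single integral with factor $\overline{\chi(uy)}-1$ vanishing identically). No gaps; the radius bookkeeping and the appeal to triviality of $\chi$ on $\Z_p$ are exactly the content of the paper's equation~(\ref{chi}).
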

\begin{proof} It suffices to observe that
	\begin{align*}
		\widehat{f}(x+u)-\widehat{f}(x)
		&=\int_{B(0,p^\gamma)}f(y)\overline{\chi(xy)}(\overline{\chi(uy)-1})dy
	\end{align*}
	and that 	 $u \in B(0, p^{-\gamma})$ implies  $|uy|_p\le 1$
	for all $y\in B(0,p^\gamma)$,  so that $\chi(uy)-1=0$, by (\ref{chi}). 
\end{proof}
\subsection{$\mathbb{Z}$-module generated by $p^n$-th roots of unity}
Let $\omega_\gamma = e^{2\pi i/p^\gamma}$, which is a primitive $p^\gamma$-th root of unity. 
\begin{lemma}[
	 \cite{Lagarias-Wang1996,Schoenberg1964}]\label{root}
Let $(a_0,a_1,\cdots, a_{p^\gamma-1})\in \Z^{p^\gamma}$.
Suppose  $$\sum_{i=0}^{p^\gamma-1 }a_i\omega_{\gamma}^i=0.$$ Then  for any integer $0\leq i\leq p^{\gamma-1}-1$,  we have $a_i=a_{i+j p^{\gamma-1}}$ for all $j=0,1,\cdots, p-1$.
\end{lemma}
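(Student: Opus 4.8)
The plan is to recast the vanishing sum as a polynomial identity and then exploit the fact that the minimal polynomial of $\omega_\gamma$ over $\Q$ is a cyclotomic polynomial with a very special shape. I would introduce $P(x)=\sum_{i=0}^{p^\gamma-1}a_i x^i\in\Z[x]$, so that the hypothesis reads $P(\omega_\gamma)=0$; that is, the primitive $p^\gamma$-th root of unity $\omega_\gamma$ is a root of $P$. The key input to invoke is that the minimal polynomial of $\omega_\gamma$ over $\Q$ is the $p^\gamma$-th cyclotomic polynomial
\[
\Phi_{p^\gamma}(x)=\sum_{k=0}^{p-1}x^{k p^{\gamma-1}},
\]
which is monic, of degree $(p-1)p^{\gamma-1}$, and has integer coefficients. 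The target conclusion is precisely the statement that the coefficient sequence of $P$ is constant along each residue class modulo $p^{\gamma-1}$, so the whole point is to show that $P$ is a $\Z[x]$-multiple of $\Phi_{p^\gamma}$.

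Since $P(\omega_\gamma)=0$ and $\Phi_{p^\gamma}$ is the minimal polynomial of $\omega_\gamma$, we get $\Phi_{p^\gamma}\mid P$ in $\Q[x]$. Because $\Phi_{p^\gamma}$ is \emph{monic} with integer coefficients, the division algorithm applied to $P$ by $\Phi_{p^\gamma}$ already produces a quotient and remainder in $\Z[x]$; as the remainder vanishes, this upgrades the relation to a factorization $P=\Phi_{p^\gamma}\,Q$ with $Q\in\Z[x]$ (this is the content of Gauss's lemma in the monic case). A degree count then pins down $Q$: since $\deg P\le p^\gamma-1$ and $\deg\Phi_{p^\gamma}=(p-1)p^{\gamma-1}=p^\gamma-p^{\gamma-1}$, we obtain $\deg Q\le p^{\gamma-1}-1$, so I may write $Q(x)=\sum_{k=0}^{p^{\gamma-1}-1}b_k x^k$ with $b_k\in\Z$.

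The final step is to read off coefficients from the product. Expanding,
\[
P(x)=\Phi_{p^\gamma}(x)Q(x)=\sum_{j=0}^{p-1}\sum_{k=0}^{p^{\gamma-1}-1}b_k\,x^{j p^{\gamma-1}+k},
\]
and the crucial observation is that every index $0\le i\le p^\gamma-1$ admits a unique representation $i=j p^{\gamma-1}+k$ with $0\le j\le p-1$ and $0\le k\le p^{\gamma-1}-1$ (Euclidean division by $p^{\gamma-1}$). Hence the monomials on the right are pairwise distinct, and comparing coefficients of $x^i$ gives $a_{j p^{\gamma-1}+k}=b_k$. Fixing $k=i$ with $0\le i\le p^{\gamma-1}-1$ and letting $j$ run through $0,\dots,p-1$ yields $a_i=a_{i+j p^{\gamma-1}}=b_i$, which is exactly the assertion.

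I do not expect a genuine obstacle once the cyclotomic identification is in hand; the two points demanding care are the passage from divisibility in $\Q[x]$ to divisibility in $\Z[x]$ — where the monicity of $\Phi_{p^\gamma}$ is exactly what licenses the integral division — and the degree bound on $Q$, which is what guarantees that each $a_i$ equals a \emph{single} $b_k$ rather than a sum of several of them. The integrality of the $a_i$ is not needed to obtain the shape of the conclusion, but it is what keeps the whole argument inside $\Z[x]$.
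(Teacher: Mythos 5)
Your proof is correct. The paper itself does not prove this lemma --- it quotes it from the cited references (Schoenberg; Lagarias--Wang) --- and your argument is precisely the standard one used there: identify $\Phi_{p^\gamma}(x)=\Phi_p(x^{p^{\gamma-1}})=\sum_{k=0}^{p-1}x^{kp^{\gamma-1}}$ as the minimal polynomial of $\omega_\gamma$, upgrade divisibility to $\Z[x]$ via monicity, and read off the coefficients using the degree bound $\deg Q\le p^{\gamma-1}-1$ and uniqueness of division by $p^{\gamma-1}$. All three delicate points (integral divisibility, the degree count, and the uniqueness of the representation $i=jp^{\gamma-1}+k$) are handled correctly, so there is nothing to add.
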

 Lemma \ref{root} immediately leads to the following
 consequence.
\begin{lemma}\label{Cor-Sch}
Let $S\subset \Zp$ be a set of $p$ points and let $\mu_S = \sum_{s \in S} \delta_s$, , where $\delta_{s} $ is the dirac measure concentrated at the point $s$. Then  $\widehat{\mu_{S}}(\xi)=0$ 
if and only if  $|(s-s^{\prime})\xi|_p=p$ for all distinct $s,s^{\prime}\in S $. 
\end{lemma}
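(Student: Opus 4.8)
The plan is to compute $\widehat{\mu_S}(\xi)$ explicitly and reduce the vanishing condition to a statement about $p$-th roots of unity, treating the two implications separately. By definition of the Fourier transform, $\widehat{\mu_S}(\xi)=\sum_{s\in S}\overline{\chi}_\xi(s)=\sum_{s\in S}\overline{\chi(\xi s)}$. Fixing one point $s_0\in S$ and using that $\chi$ is multiplicative under addition (so that $\overline{\chi(\xi s)}=\overline{\chi(\xi s_0)}\,\overline{\chi((s-s_0)\xi)}$), I obtain
$$\widehat{\mu_S}(\xi)=\overline{\chi(\xi s_0)}\sum_{s\in S}\overline{\chi((s-s_0)\xi)}.$$
The elementary observation underlying everything is that $\chi((s-s')\xi)=e^{2\pi i\{(s-s')\xi\}}$ is a nontrivial $p$-th root of unity exactly when $|(s-s')\xi|_p=p$ (valuation $-1$ forces the fractional part to be $m/p$ with $1\le m\le p-1$), and equals $1$ exactly when $|(s-s')\xi|_p\le 1$.

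For sufficiency, assume $|(s-s')\xi|_p=p$ for all distinct $s,s'$. I would argue that the $p$ numbers $\overline{\chi((s-s_0)\xi)}$, $s\in S$, are pairwise distinct $p$-th roots of unity: each is a $p$-th root of unity by the observation above (the term $s=s_0$ giving $1$), and two of them coincide only if $\chi((s-s')\xi)=1$, i.e. $|(s-s')\xi|_p\le 1$, contradicting the hypothesis. Hence they run through all $p$ distinct $p$-th roots of unity, whose sum is $0$; multiplying by the unit-modulus factor $\overline{\chi(\xi s_0)}$ leaves $\widehat{\mu_S}(\xi)=0$.

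For necessity, which is where Lemma \ref{root} enters, I would first dispose of the degenerate case: if $|\xi|_p\le 1$ then $\{\xi s\}=0$ for every $s\in\Zp$, so $\widehat{\mu_S}(\xi)=p\neq 0$. Thus $\widehat{\mu_S}(\xi)=0$ forces $N:=-v(\xi)\ge 1$, and every value $e^{-2\pi i\{\xi s\}}$ is a $p^N$-th root of unity. Setting $a_j=\#\{s\in S: e^{-2\pi i\{\xi s\}}=\omega_N^{j}\}$ for $0\le j\le p^N-1$, these are non-negative integers with $\sum_j a_j=p$ and $\sum_j a_j\omega_N^{j}=\widehat{\mu_S}(\xi)=0$. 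Lemma \ref{root} then gives $a_i=a_{i+jp^{N-1}}$ for all $0\le i\le p^{N-1}-1$ and all $j$. Summing a full block over $j$ and then over $i$ yields $p\sum_{i=0}^{p^{N-1}-1}a_i=p$, so exactly one $a_{i_0}$ among $0,\dots,p^{N-1}-1$ equals $1$ and the rest vanish; by periodicity the nonzero coefficients are precisely $a_{i_0+jp^{N-1}}=1$, $j=0,\dots,p-1$. Consequently the $p$ character values $e^{-2\pi i\{\xi s\}}$ are pairwise distinct and form the coset $\omega_N^{i_0}\{1,\omega_1,\dots,\omega_1^{\,p-1}\}$ of the group of $p$-th roots of unity. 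For distinct $s,s'$ the ratio $\chi((s-s')\xi)=e^{2\pi i(\{\xi s\}-\{\xi s'\})}$ is then a nontrivial $p$-th root of unity, which by the elementary observation forces $|(s-s')\xi|_p=p$.

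The main obstacle is the necessity direction: a single scalar equation $\widehat{\mu_S}(\xi)=0$ must be leveraged into a statement about all $\binom{p}{2}$ pairwise differences. The crux is the counting step after Lemma \ref{root}, which shows that the $p$ exponents occupy exactly one residue per block of length $p^{N-1}$, so that the character values are distinct and lie in a single coset of the $p$-th roots of unity; controlling their pairwise ratios then pins down the valuation of every $(s-s')\xi$ exactly. A secondary point to handle carefully is the reduction to non-negative integer coefficients (so that Lemma \ref{root} applies) and the degenerate case $|\xi|_p\le 1$, in which the hypothesis cannot hold at all.
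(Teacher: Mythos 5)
Your proposal is correct and takes essentially the same route as the paper: the paper's one-line proof (``it suffices to notice that $\widehat{\mu_{S}}(\xi)=\sum_{s\in S} e^{-2\pi i \{s \xi\}}$'') implicitly relies on exactly the reduction you carry out, namely viewing the vanishing sum as a relation among $p^N$-th roots of unity and invoking Lemma \ref{root}. You have simply made explicit the counting argument (one nonzero coefficient per block of length $p^{N-1}$, forcing the $p$ character values to form a coset of the $p$-th roots of unity) and the easy sufficiency direction, both of which the paper leaves to the reader.
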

\begin{proof}
It suffices to notice that  
$
\widehat{\mu_{S}}(\xi)=\sum_{s\in S} e^{-2\pi i \{s \xi\}}.
$
\end{proof}
For  a finite set $T\subset \Zp$, let  $$\gamma_{T}:=\max_{\substack {t,t^{\prime}\in T\\t\neq t^{\prime}}}\{-\log_{p}(|t-t^{\prime}|_p)\}.$$

\begin{lemma}\label{nonzero}Let $T \subset Z_p$ be a finite set.
We have 
$\widehat{\mu_{T}}(\xi)\neq 0$   for all $ \xi \not\in B(0, p^{\gamma_T+1})$.
\end{lemma}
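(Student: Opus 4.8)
The plan is to rewrite $\widehat{\mu_T}(\xi)$ as a sum of \emph{distinct} powers of a single primitive root of unity and then invoke Lemma \ref{root}. (We may assume $|T|\ge 2$; for $|T|\le 1$ the sum is a single unit-modulus term and the claim is trivial.) Write $|\xi|_p=p^k$ and factor $\xi=p^{-k}\eta$ with $\eta\in\Zp^\times$; the hypothesis $\xi\notin B(0,p^{\gamma_T+1})$ is exactly $|\xi|_p>p^{\gamma_T+1}$, i.e. $k\ge\gamma_T+2$. For $t\in\Zp$ we have $t\eta\in\Zp$, and a direct computation of the fractional part yields $\{t\xi\}=p^{-k}\,n_t$, where $n_t:=(t\eta)\bmod p^k\in\{0,1,\dots,p^k-1\}$. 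Hence, with $\omega_k=e^{2\pi i/p^k}$,
\[
\widehat{\mu_T}(\xi)=\sum_{t\in T}e^{-2\pi i\{t\xi\}}=\sum_{t\in T}\omega_k^{-n_t}.
\]

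The next step records two separation properties of the $n_t$, both flowing from the definition of $\gamma_T$. For distinct $t,t'\in T$ one has $|t-t'|_p\ge p^{-\gamma_T}$, and since $\eta$ is a unit, $|(t-t')\eta|_p=|t-t'|_p\ge p^{-\gamma_T}>p^{-(k-1)}$, the last inequality being precisely $\gamma_T<k-1$. Thus $(t-t')\eta\notin p^{k-1}\Zp$, so $t\eta\not\equiv t'\eta\pmod{p^{k-1}}$ (and a fortiori $\pmod{p^k}$): the integers $n_t$ are pairwise distinct modulo $p^k$ and, more sharply, pairwise incongruent modulo $p^{k-1}$. Setting $a_i=1$ if $i\equiv-n_t\pmod{p^k}$ for some $t\in T$ and $a_i=0$ otherwise $(0\le i\le p^k-1)$, we may then rewrite $\widehat{\mu_T}(\xi)=\sum_{i=0}^{p^k-1}a_i\,\omega_k^{\,i}$ with $a_i\in\{0,1\}$.

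Finally I argue by contradiction. Suppose $\widehat{\mu_T}(\xi)=0$. Applying Lemma \ref{root} with $\gamma=k$ gives $a_{i_0}=a_{i_0+jp^{k-1}}$ for all $0\le i_0\le p^{k-1}-1$ and all $0\le j\le p-1$; in other words $(a_i)$ is constant on each residue class modulo $p^{k-1}$, each of which contains $p\ge 2$ indices in $\{0,\dots,p^k-1\}$. But the indices $i$ with $a_i=1$ are exactly the residues $-n_t\pmod{p^k}$, and by the previous paragraph these are pairwise incongruent modulo $p^{k-1}$; hence each residue class modulo $p^{k-1}$ contains at most one index carrying $a_i=1$. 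Constancy on a class of size $p\ge 2$ then forces $a_i=0$ throughout that class, and as the classes exhaust $\{0,\dots,p^k-1\}$ every $a_i$ vanishes, i.e. $T=\emptyset$, a contradiction. The one delicate point is the index bookkeeping: the hypothesis $|\xi|_p>p^{\gamma_T+1}$ must be turned into the strict inequality $\gamma_T<k-1$, which is exactly what lets the smallest $p$-adic gap $p^{-\gamma_T}$ among the points of $T$ resolve the indices one level finer than $p^k$, namely modulo $p^{k-1}$, so that the periodic coefficient pattern demanded by Lemma \ref{root} can never be filled.
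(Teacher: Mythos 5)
Your proof is correct, and while it runs on the same engine as the paper's --- a contradiction argument powered by Lemma \ref{root} together with the separation bound $|t-t'|_p\ge p^{-\gamma_T}$ --- the endgame is genuinely different. The paper uses Lemma \ref{root} only through a structural consequence: if $\widehat{\mu_T}(\xi)=0$, then $T$ splits into $p$-element subsets $T_i$ with $\widehat{\mu_{T_i}}(\xi)=0$, and then Lemma \ref{Cor-Sch} (the $p$-point criterion) forces the exact distances $|(t-t')\xi|_p=p$ for distinct $t,t'\in T_0$, which is incompatible with $|t-t'|_p\ge p^{-\gamma_T}$ once $|\xi|_p>p^{\gamma_T+1}$. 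You instead make explicit the step the paper leaves implicit --- the computation $\{t\xi\}=p^{-k}\bigl((t\eta)\bmod p^k\bigr)$ with $|\xi|_p=p^k$, which rewrites $\widehat{\mu_T}(\xi)=\sum_i a_i\omega_k^i$ with $a_i\in\{0,1\}$ --- and then close the argument directly: the hypothesis $k\ge\gamma_T+2$ makes the exponents pairwise incongruent modulo $p^{k-1}$, so each residue class modulo $p^{k-1}$ carries at most one nonzero coefficient, while Lemma \ref{root} demands constancy of the coefficients on each such class of $p\ge 2$ indices; hence all $a_i=0$ and $T=\emptyset$, a contradiction. Your route thus bypasses Lemma \ref{Cor-Sch} and the partition step entirely, and it supplies precisely the exponent bookkeeping that the paper's terse deduction of the partition from Lemma \ref{root} silently requires, so it is arguably more self-contained; the paper's version, in exchange, is shorter on the page and isolates a reusable geometric fact (a $p$-point set has vanishing transform at $\xi$ exactly when all its pairs sit at distance $p/|\xi|_p$), which is the picture the authors want the reader to retain. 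One pedantic point in your favor: your reduction to $|T|\ge 2$ is the right way to read the statement, since $\gamma_T$ is only defined when $T$ has at least two points.
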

\begin{proof}
We give a proof by contradiction. Suppose that  $\widehat{\mu_{T}}(\xi)= 0$ for some $\xi \in \Q_p$  with $|\xi|_p>p^{\gamma_T+1} $.  Then from $\widehat{\mu_{T}}(\xi)=0$ and Lemma \ref{root}, we deduce that the cardinality of $T$ is a multiple of $p$  and  $T$ is partitioned into ${\rm Card}(T) /p$ subsets $T_0, T_1,\cdots ,T_{{\rm Card}(T)/p}$  such that  each  $T_i$  contains $p$ elements and   $\widehat{\mu_{T_i}}(\xi)= 0$. 
By Lemma \ref{Cor-Sch} applied to $S=T_0$, we get $|(t-t^{\prime})\xi|_p=p$ for distinct
   $t, t^{\prime}\in T_0$. 
However, by the definition of $\gamma_T$ and the fact $|\xi|_p>p^{\gamma_T +1}$, we have
\[
|t-t^{\prime}|_p\geq p^{-\gamma_T}>\frac{p}{|\xi|_p},
\]
which contradicts $|(t-t^{\prime})\xi|_p=p$.
\end{proof}
 
%

\subsection{Proof of Theorem \ref{mainthm}}

It is clear  that  the translation and the
dilation don't change the tiling property of a tile in $\Q_p$.  Then, without loss of generality, we assume that $\Omega\subset \Z_p$. 
Observe that for any $a\in \Q_p$, either  $\Omega+a \subset \Z_p$ or $(\Omega+a)\cap \Z_p= \emptyset$. Also observe that $\Z_p$ is a tile of $\Q_p$ with tiling complement $\mathbb{L}$ (see Section \ref{definition} for the definition of $\mathbb{L}$). These observations imply that   $\Omega$ is a  tile of $\Z_p$ if and only if   it  is a tile of  $\Q_p$.  
Now assume that  $\Omega$ is a tile  of $\Z_p$ with tiling complement $T$.  Since $\m(\Omega)>0$ and $\m(\Z_p)=1$, the tiling complement $T$ is a finite  subset of $\Z_p$.  Let $\mu_{T} := \sum_{t\in T } \delta_{t}$. By definition,  $(\Omega, T)$ is a  tiling pair in $\Z_p$ iff  the following convolution equality holds
\begin{align}\label{convo}
1_{\Omega}*\mu_{T}(x) =1_{\Zp}(x),  \quad a.e. \  x\in \Qp.
\end{align}


Taking  Fourier transform of both sides of the equality, 
by Lemma \ref{FourierIntegral}, we have 
\begin{align}\label{zero}
	 \forall\  \xi\in \Q_p\setminus\Z_p, \quad \quad \widehat{{1}_{\Omega}}(\xi)\cdot\widehat{{\mu}_{T}}(\xi)=0.
\end{align}
By Lemma \ref{nonzero},   we deduce from (\ref{zero}) that
\begin{align}\label{zero1}
\widehat{{1}_{\Omega}}(\xi)=0  \quad \hbox{for all $\xi\in  \Q_p$ with $|\xi|_p>p^{\gamma_T+1} $.}
\end{align}
Then, by using Lemma \ref{localconstant}, we get
$$ \widehat{\widehat{1_{\Omega}}}(x+u)= \widehat{\widehat{1_{\Omega}}}(x) \quad (\forall  x\in \Q_p,  \forall  u\in B(0, p^{-(\gamma_T+1)})),$$
that is to say, the continuous function $\widehat{\widehat{1_{\Omega}}}$ is constant on each ball of radius $p^{-(\gamma_T+1)}$. Therefore,
   the support of $\widehat{\widehat{1_{\Omega}}}$ is a union of disjoint balls of radius  $p^{-(\gamma_T+1)}$. 
   That is the same for $\Omega$ but up to a set of zero measure  because 
    $  \widehat{\widehat{1_{\Omega}}}(x)=1_{\Omega}(-x),  \ a.e. \  x\in \Qp. $ Then, using the fact that $\m(\Omega)<\infty $, we conclude that the support of $\Omega$ is equal  almost everywhere to a finite number of disjoint balls of radius   $p^{-(\gamma_T+1)}$.
    \qed

\setcounter{equation}{0}


\begin{thebibliography}{99}

%
%
%
%

\bibitem{Ban}

C. Bandt, {\em Self-similar sets 5: Integer matrices and fractal tilings of $\R^n$} . Proc. Am. Math. Soc. 112, 549-562 (1991)






\bibitem{Fan}
A. H. Fan, {\em Spectral measures on local fields}, Proceedings of the 20th International Conference on Difference Equations and Application, Springer (to appear).  	arXiv:1505.06230

\bibitem{FFS}
	A. H. Fan, S. L. Fan and  R. X. Shi, {\em  Compact open spectral sets  in $\mathbb{Q}_p$}, Preprint. 	arXiv:1511.04837
	
\bibitem{FFLS2015}
A. H. Fan, S. L. Fan, L. M. Liao and R. X. Shi, {\em  Fuglede's Conjecture holds in  $\Q_p$}, Preprint.

\bibitem{GH94}
K. H. Gr\"ochenig  and  A. Haas,  {\em Self-similar lattice tilings.} J. Fourier Anal. 1, 131-170 (1994)

%
%
%
%

%

\bibitem{kenyonthesis}
R. Kenyon, {\em Self-Similar Tilings.} Ph.D. thesis, Princeton University, 1990.
%
%
\bibitem{Lagarias-Wang1996}
J. C. Lagarias and Y. Wang, {\em Integral self-affine tiles in $\mathbb{R}^n$ I. Standard and non-standard digits sets}, J. London Math. Soc., 54 (1996), 161-179.
\bibitem{Lagarias-WangInvent}
J. C. Lagarias and Y. Wang, {\em Tiling the line with translates of one tile}, Invent. Math. 124(1996), 341-365.

%


\bibitem{Schoenberg1964}
I. J.  Schoenberg, {\em A note on the cyclotomic polynomial}, Mathematika 11 (1964) 131-136.

\bibitem{Tijdeman}R. Tijdeman,
{\em Decomposition of the integers as a direct sum of two subsets}. David, S. (ed)
Number Theory Seminar Paris 1993 Cambridge: Cambridge University Press 1995,  261-276.


\end{thebibliography}
\end{document}